\newtheorem{theorem}{Theorem}
\newtheorem{proposition}[theorem]{Proposition}
\theoremstyle{definition}
\newtheorem{remark}[theorem]{Remark}
\newcommand{\Graph}{{\rm Graph}}
\newcommand{\bS}{{\mathbf S}}
\newcommand{\spt}{\mathop{\rm spt}}
\newcommand{\vol}{{\rm vol}}
\begin{document}
\title[Apriori estimates for optimal transport maps]{A geometric approach to apriori estimates for optimal transport maps}

\thanks{SB's research is supported by the National Science Foundation under grant DMS-2103573 and by the Simons Foundation. RM's research is supported in part by the Canada Research Chairs program CRC-2020-00289, a grant from the Simons Foundation (923125, McCann), and Natural Sciences and Engineering Research Council of Canada Discovery Grant RGPIN- 2020--04162.
\copyright \today\ by the authors.}

\author{Simon Brendle}
\address{Columbia University, 2290 Broadway, New York NY 10027, USA} \email{simon.brendle@columbia.edu}
\author{Flavien L\'eger}
\address{INRIA Paris, France} 
\email{flavien.leger@inria.fr}
\author{Robert J. McCann}
\address{Department of Mathematics, University of Toronto, Toronto, Ontario, Canada M5S 2E4} \email{mccann@math.toronto.edu}
\author{Cale Rankin}
\address{
Fields Institute for the Mathematical Sciences and Department of Mathematics, University of Toronto, Toronto, Ontario, Canada M5S 2E4.
\\ {\em Current address:} School of Mathematics, Monash University, 9 Rainforest Walk, Melbourne, VIC 3800, Australia} \email{
cale.rankin@monash.edu}

\begin{abstract}
A key inequality which underpins the regularity theory of optimal transport for costs satisfying the Ma--Trudinger--Wang condition
is the Pogorelov second derivative bound.  This translates to an apriori  interior $C^1$ estimate for smooth optimal maps.
Here we give a new derivation of this estimate which relies in part on Kim, McCann and Warren's observation that the graph of an optimal map becomes a volume maximizing spacelike submanifold when the product of the source and target domains is endowed with a suitable pseudo-Riemannian geometry that combines both the marginal densities and the cost.

\bigskip
\noindent
\emph{Keywords:} Pogorelov apriori estimate, Monge-Amp\`ere-type equation, optimal transportation map, volume maximizing submanifold, split special Lagrangian geometry, pseudo-Riemannian, second boundary value, degenerate elliptic, interior regularity, Ma--Trudinger--Wang  \medskip

\noindent
\emph{MSC2020:}
35B45, 
35J96, 
49Q22, 
53C38, 
53C50, 
58J60, 

\end{abstract}

\maketitle

\section{Introduction}

Apriori second-derivative estimates for solutions of the Monge--Amp\`ere equation date back to Pogorelov \cite{Pogorelov64,Pogorelov71}, who combined them 
with a third-derivative estimate of Calabi \cite{Calabi58} to deduce the regularity of solutions to Minkowski's problem. Such estimates have a rich history \cite{TrudingerUrbas84,TrudingerWang08}. 
They play a key role in the regularity theory of optimal transportation,  which involve Monge--Amp\`ere type equations whose exact form depends on the cost optimized.
For the quadratic cost function,  this theory was developed by Delano\"e \cite{Delanoe91}
in the plane and by Urbas \cite{Urbas97} in higher dimensions, parallel results being obtained using different techniques by Caffarelli \cite{Caffarelli92,Caffarelli92b,Caffarelli96b}. At the same time, a regularity theory for reflector antenna design was developed by X.-J.~Wang \cite{Wang96}, which can also be seen as an optimal transport problem with the restriction of a logarithmic cost to the sphere \cite{Wang04}.  For more general cost functions,  Ma, Trudinger and Wang \cite{MaTrudingerWang05,TrudingerWang09b} identified a sufficient condition for the regularity of optimal maps, whose necessity was deduced by Loeper \cite{Loeper09}.  Although challenging to interpret,  Ma, Trudinger and Wang's condition (A3) (and its weak variant (A3w)) were subsequently understood by Kim and McCann
as the positivity (respectively non-negativity) of certain sectional curvatures of a pseudo-Riemannian metric induced on the product of the source and target domains by the transportation cost~\cite{KimMcCann10}.  With Warren~\cite{KimMcCannWarren10}, Kim and McCann discovered that the graph of an optimal map becomes a volume maximizing spacelike submanifold when the same metric is conformally rescaled so that its volume gives the product of the source and target densities.  
 
The purpose of the present article is to exploit this geometric perspective 
 to give a new derivation of Pogorelov type apriori 
estimates for optimal maps with respect to costs satisfying the Ma--Trudinger--Wang condition.  Our approach is different but related 
to the strategy developed for $m=2=n$ by Warren \cite{Warren-unpublished}, who was indirectly inspired \cite{LiSalavessa11} 
by the same sources \cite{Wang02,TsuiWang04} as us. It 
yields new insights,  which we hope may lead to further progress: in particular, it allows us to separate the general structure
of the maximum principle estimate from the particulars of the application.  

Although it will not be shown here, interior Pogorelov estimates persist even under the weaker variant (A3w) of the Ma--Trudinger--Wang condition, 
but the argument is more involved~\cite{LiuTrudingerWang10};
regularity of the optimal transport map follows from appropriate domain convexity conditions \cite{TrudingerWang09b}. 
For the quadratic cost,  $2$-uniformity of the convexity required \cite{Caffarelli96b,Urbas97,TrudingerWang09b}
was recently relaxed by Chen, Liu and Wang \cite{ChenLiuWang19,ChenLiuWang21}, though some smoothness remains necessary \cite{Jhaveri19}. Finally,  it is interesting to note that for the quadratic cost,  Kim and McCann's metric \cite{KimMcCann10} restricted to the graph of optimal map coincides with the Hessian metric of the solution of the Monge--Amp\`ere equation,  introduced by Calabi \cite{Calabi58}.

The next section of this note gives a general estimate for maximal spacelike submanifolds  in a fixed pseudo-Riemannian background. The last section explains how to use this to recover and apply
Ma, Trudinger and Wang's local $C^2$ estimate for optimal maps.

\section{Estimates for maximal spacelike submanifolds}

Our ambient space will be a fixed manifold $\hat{M}$ of dimension $n+m$. Let $\hat{g}$ be a pseudo-metric on $\hat{M}$ of signature $(n,m)$. Let $\hat{D}$ denote the Levi-Civita connection on $\hat{M}$, and let $\hat{R}$ denote its Riemann curvature tensor. We adopt the sign convention where 
\[\hat{R}(X,Y,Z,W) = -\hat{g}(\hat{D}_X \hat{D}_Y Z - \hat{D}_Y \hat{D}_X Z - \hat{D}_{[X,Y]} Z,W)\] 
for all vector fields $X,Y,Z,W$ on $\hat{M}$. Let $\hat{E}_1,\hdots,\hat{E}_{n+m}$ denote a local frame of vector fields on $\hat{M}$ which are linearly independent at each point. At each point on $\hat{M}$, the matrix $\hat{g}(\hat{E}_\alpha,\hat{E}_\beta)$ is invertible. We denote its inverse by $\hat{\sigma}^{\alpha\beta}$. We will also fix a smooth symmetric $(0,2)$-tensor field $\hat{S}$ on $\hat{M}$.

We are interested in studying $n$-dimensional spacelike submanifolds of $\hat{M}$. Given such a submanifold $M$, let $g$ and $S$ denote the respective restrictions of $\hat{g}$ and $\hat{S}$ to $M$.  Since $M$ is spacelike, $g$ is positive definite. As in \cite{ONeill}, let $D$ denote the Levi-Civita connection on $M$, and let $D^\perp$ denote the connection on the normal bundle of $M$. We denote by $I\!I: TM \times TM \to T^\perp M$ the second fundamental form of $M$. In other words, if $X$ and $Y$ are two tangential vector fields on $M$, then 
\[I\!I(X,Y) = \hat{D}_X Y - D_X Y.\] 
The mean curvature vector of $M$ is defined as the trace of $I\!I$, which we assume vanishes identically.

We will use the notation  
\[(\hat{D}_V I\!I)(X,Y) = \hat{D}_V(I\!I(X,Y)) - I\!I(D_V X,Y) - I\!I(X,D_V Y)\]
and 
\[(D_V^\perp I\!I)(X,Y) = D_V^\perp(I\!I(X,Y)) - I\!I(D_V X,Y) - I\!I(X,D_V Y)\] 
for all tangential vector fields $X,Y,V$ on $M$ (compare \cite{ONeill}, p.~114). Both expressions are tensorial in $X,Y,V$. Moreover, $(D_V^\perp I\!I)(X,Y) \in T^\perp M$ is the normal component of $(\hat{D}_V I\!I)(X,Y) \in T\hat{M}$.

In the following, we assume that $\{e_1,\hdots,e_n\}$ is a local orthonormal frame on $M$ and $\{e_1^\perp,\hdots,e_m^\perp\}$ is a local orthonormal frame for the normal bundle of $M$. Then $\hat{g}(e_k,e_l) = \delta_{kl}$, $\hat{g}(e_p^\perp,e_q^\perp) = -\delta_{pq}$, and $\hat{g}(e_k,e_p^\perp) = 0$. 

Given $(\hat{M},\hat{g},\hat{S})$, we consider an $n$-dimensional spacelike submanifold $M \subset \hat{M}$ with zero mean curvature. We will show that the restriction $S$ of  $\hat{S}$ to $M$ satisfies an elliptic partial differential equation with coefficients and inhomogeneities given by $\hat{g}$ and $\hat{S}$ and their first two derivatives. This identity contains quadratic terms in the second fundamental form. Since $I\!I(X,Y) \in T^\perp M$ and the metric on the normal space $T^\perp M$ is negative definite, these quadratic terms have a favorable sign, allowing us to apply the maximum principle. This calculation is inspired in part by identities found for hypersurfaces by Calabi \cite{Calabi70}, and in the split geometry of the next section by Mealy \cite{Mealy91}, Harvey and Lawson \cite{HarveyLawson12}.

\begin{proposition}[An elliptic identity for zero mean curvature submanifolds]
\label{pde.for.S}
Let $\hat{R}$ denote the Riemann curvature tensor and $\hat{D}$ the Levi-Civita connection of a signature $(n,m)$ pseudo-Riemannian manifold $(\hat{M},\hat{g})$.
Let $M \subset \hat{M}$ be an $n$-dimensional spacelike submanifold whose second fundamental form $I\!I: TM \times TM \to T^\perp M$ has vanishing trace.
Let $\hat{S}$ be a symmetric $(0,2)$-tensor field on $\hat{M}$, and let $S$ denote its restriction to $M$. Let $\hat{E}_1,\ldots,\hat{E}_{n+m}$ denote a local frame on $\hat{M}$, and $e_1,\ldots,e_n$ denote a local orthonormal frame on $M$. Finally, let $X,Y$ be tangential vector fields on $M$. Then 
\begin{align*} 
&(\Delta S)(X,Y) \\ 
&= \sum_{l=1}^n (\hat{D}_{e_l,e_l}^2 \hat{S})(X,Y) \\ 
&+ 2 \sum_{l=1}^n (\hat{D}_{e_l} \hat{S})(I\!I(e_l,X),Y) + 2 \sum_{l=1}^n (\hat{D}_{e_l} \hat{S})(X,I\!I(e_l,Y)) \\ 
&+ 2 \sum_{l=1}^n \hat{S}(I\!I(e_l,X),I\!I(e_l,Y)) \\ 
&- \sum_{k,l=1}^n \hat{g}(I\!I(e_l,X),I\!I(e_l,e_k)) \, S(e_k,Y) - \sum_{k,l=1}^n \hat{g}(I\!I(e_l,Y),I\!I(e_l,e_k)) \, S(X,e_k) \\ 
&- \sum_{\alpha{, \beta}=1}^{n+m} \sum_{l=1}^n \hat{\sigma}^{\alpha\beta} \, \hat{R}(e_l,X,e_l,\hat{E}_\alpha) \, \hat{S}(\hat{E}_\beta,Y) 
- \sum_{\alpha{,\beta}=1}^{n+m} \sum_{l=1}^n \hat{\sigma}^{\alpha\beta} \, \hat{R}(e_l,Y,e_l,\hat{E}_\alpha) \, \hat{S}(X,\hat{E}_\beta) \\ 
&+ \sum_{k=1}^n \sum_{l=1}^n \hat{R}(e_l,X,e_l,e_k) \, S(e_k,Y) + \sum_{k=1}^n \sum_{l=1}^n \hat{R}(e_l,Y,e_l,e_k) \, S(X,e_k) 
\end{align*} 
at each point on $M$, where $\hat{\sigma}^{\alpha \beta}$ denotes the inverse of the matrix $\hat{g}(\hat{E}_\alpha,\hat{E}_\beta)$.
\end{proposition}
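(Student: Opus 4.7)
My plan is to prove the identity by a direct Gauss--Codazzi computation, working at a fixed point $p \in M$. The starting point is a first-order formula for $DS$ in terms of $\hat{D}\hat{S}$ and $I\!I$. Applying the Leibniz rule to both sides of $V(S(X,Y)) = V(\hat{S}(X,Y))$ for tangential $V, X, Y$, using $\hat{D}_V X = D_V X + I\!I(V,X)$, and noting that $S$ and $\hat{S}$ agree on tangent vectors, gives
\[
(D_V S)(X,Y) = (\hat{D}_V \hat{S})(X,Y) + \hat{S}(I\!I(V,X),Y) + \hat{S}(X, I\!I(V,Y)),
\]
which already explains the shape of the third line of the proposition.

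To iterate, I would choose the orthonormal frame $\{e_l\}$ to be parallel at $p$, and extend $X, Y$ so that $D_{e_l} X|_p = D_{e_l} Y|_p = 0$; then $(D^2_{e_l,e_l} S)(X,Y)|_p$ collapses to $e_l\bigl((D_{e_l} S)(X,Y)\bigr)|_p$. Differentiating the first-order formula once more, I split every ambient derivative $\hat{D}_{e_l}$ via $\hat{D}_{e_l} X = I\!I(e_l,X)$ (at $p$, for tangent fields) and the Weingarten formula $\hat{D}_{e_l} N = -A_N(e_l) + D^\perp_{e_l} N$ with $\hat{g}(A_N(e_k), e_l) = \hat{g}(I\!I(e_k,e_l), N)$ (for normal fields $N = I\!I(e_l,X)$). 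The pieces assemble into lines three through five as follows: the shape operator $A_N$ contributes the $\hat{g}(I\!I,I\!I)\,S$ terms of line five, the Leibniz terms involving $\hat{D}_{e_l}\hat{S}$ produce the $\hat{S}(I\!I,I\!I)$ terms of line four, and line three acquires the coefficient $2$ because it receives one contribution from iterating the $(\hat{D}\hat{S})$ factor and a second from differentiating the $\hat{S}(I\!I,\cdot)$ factor. Converting $\sum_l \hat{D}_{e_l} \hat{D}_{e_l} \hat{S}$ to the symmetric second derivative $\sum_l \hat{D}^2_{e_l,e_l} \hat{S}$ of line two produces a stray term $\sum_l \hat{D}_{I\!I(e_l,e_l)} \hat{S}$ that is annihilated by the zero mean curvature hypothesis $\sum_l I\!I(e_l,e_l) = 0$.

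Lines six and seven then come from the sole surviving piece, the normal derivative $\sum_l (D^\perp_{e_l} I\!I)(e_l, X)$. The Codazzi equation $(D^\perp_V I\!I)(X,Y) - (D^\perp_X I\!I)(V,Y) = (\hat{R}(V,X)Y)^\perp$, with $e_l$ and $X$ swapped and contracted over $l$, rewrites this as $\sum_l (D^\perp_X I\!I)(e_l,e_l) + \sum_l (\hat{R}(e_l,X)e_l)^\perp$; the first piece vanishes by minimality once more, and writing the normal projection as the full vector minus its tangential part---expanding the full vector via the frame $\{\hat{E}_\alpha\}$ and inverse matrix $\hat{\sigma}^{\alpha\beta}$ and the tangential part via the orthonormal frame $\{e_k\}$ on $M$---reproduces precisely lines six and seven. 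A symmetric computation handles the $Y$-slot. The main obstacle is purely bookkeeping: many terms with delicate signs stemming from the signature $(n,m)$ (normal vectors having negative self-inner product, so that expansion of a normal vector in an orthonormal normal frame introduces an explicit minus sign) and from the stated curvature convention $\hat{R}(X,Y,Z,W) = -\hat{g}(\hat{D}_X \hat{D}_Y Z - \hat{D}_Y \hat{D}_X Z - \hat{D}_{[X,Y]} Z, W)$ must be tracked consistently throughout. Beyond these sign choices the calculation is essentially linear algebra coupled with the covariant Leibniz rule; no further ideas are required.
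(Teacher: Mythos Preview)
Your proposal is correct and follows essentially the same route as the paper: derive the first-order identity $(D_W S)(X,Y) = (\hat{D}_W\hat{S})(X,Y) + \hat{S}(I\!I(W,X),Y) + \hat{S}(X,I\!I(W,Y))$, differentiate once more, split tangential and normal parts of the derivative of $I\!I$ via Weingarten and Codazzi, kill the traced-$I\!I$ terms by minimality, and finally rewrite the normal-frame curvature sum as ``full minus tangential'' using $\sum_{\alpha,\beta}\hat{\sigma}^{\alpha\beta}\hat{E}_\alpha\otimes\hat{E}_\beta = \sum_k e_k\otimes e_k - \sum_p e_p^\perp\otimes e_p^\perp$. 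The only cosmetic difference is that you fix a frame parallel at $p$ while the paper keeps general $V,W$ and works with the invariant object $(\hat{D}_V I\!I)(W,X)$; the substance is identical.
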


\begin{proof}
In the following, $X,Y,V,W$ will denote tangential vector fields on $M$. By definition,  
\[S(X,Y) = \hat{S}(X,Y).\] 
Differentiating this identity in direction $W$ gives 
\[(D_W S)(X,Y) = (\hat{D}_W \hat{S})(X,Y) + \hat{S}(I\!I(W,X),Y) + \hat{S}(X,I\!I(W,Y)).\] 
Differentiating this identity in direction $V$, we obtain 
\begin{align*} 
(D_{V,W}^2 S)(X,Y) 
&= (\hat{D}_{V,W}^2 \hat{S})(X,Y) + (\hat{D}_{I\!I(V,W)} \hat{S})(X,Y) \\ 
&+ (\hat{D}_W \hat{S})(I\!I(V,X),Y) + (\hat{D}_W \hat{S})(X,I\!I(V,Y)) \\ 
&+ (\hat{D}_V \hat{S})(I\!I(W,X),Y) + (\hat{D}_V \hat{S})(X,I\!I(W,Y)) \\ 
&+ \hat{S}(I\!I(W,X),I\!I(V,Y)) + \hat{S}(I\!I(V,X),I\!I(W,Y)) \\ 
&+ \hat{S}((\hat{D}_V I\!I)(W,X),Y) + \hat{S}(X,(\hat{D}_V I\!I)(W,Y)) 
\end{align*} 
at each point on $M$. We decompose the term $(\hat{D}_V I\!I)(W,X)$ into its tangential and normal components: 
\begin{equation} 
\label{covariant.derivative.of.II}
(\hat{D}_V I\!I)(W,X) = (D_V^\perp I\!I)(W,X) + \sum_{k=1}^n \hat{g}((\hat{D}_V I\!I)(W,X),e_k) \, e_k. 
\end{equation} 
Using the Codazzi equation, we obtain  
\begin{equation} 
\label{covariant.derivative.of.II.normal.component}
(D_V^\perp I\!I)(W,X) = (D_X^\perp I\!I)(V,W) + \sum_{p=1}^m \hat{R}(V,X,W,e_p^\perp) \, e_p^\perp 
\end{equation} 
(see \cite{ONeill}, p.~115). Note that the curvature term on the right hand side in \eqref{covariant.derivative.of.II.normal.component} comes with a plus sign, due to the fact that the metric on the normal space is negative definite. Moreover, differentiating the identity $\hat{g}(I\!I(W,X),e_k)=0$ in direction $V$ gives 
\begin{equation} 
\label{covariant.derivative.of.II.tangential.component}
\hat{g}((\hat{D}_V I\!I)(W,X),e_k) = -\hat{g}(I\!I(W,X),I\!I(V,e_k)). 
\end{equation} 
Substituting \eqref{covariant.derivative.of.II.normal.component} and \eqref{covariant.derivative.of.II.tangential.component} into \eqref{covariant.derivative.of.II}, we obtain 
\begin{align} 
(\hat{D}_V I\!I)(W,X) 
&= (D_X^\perp I\!I)(V,W) - \sum_{k=1}^n \hat{g}(I\!I(W,X),I\!I(V,e_k)) \, e_k \notag \\ 
&+ \sum_{p=1}^m \hat{R}(V,X,W,e_p^\perp) \, e_p^\perp 
\end{align} 
at each point on $M$. Therefore, 
\begin{align*} 
&(D_{V,W}^2 S)(X,Y) \\ 
&= (\hat{D}_{V,W}^2 \hat{S})(X,Y) + (\hat{D}_{I\!I(V,W)} \hat{S})(X,Y) \\ 
&+ (\hat{D}_W \hat{S})(I\!I(V,X),Y) + (\hat{D}_W \hat{S})(X,I\!I(V,Y)) \\ 
&+ (\hat{D}_V \hat{S})(I\!I(W,X),Y) + (\hat{D}_V \hat{S})(X,I\!I(W,Y)) \\ 
&+ \hat{S}(I\!I(W,X),I\!I(V,Y)) + \hat{S}(I\!I(V,X),I\!I(W,Y)) \\ 
&+ \hat{S}((D_X^\perp I\!I)(V,W),Y) + \hat{S}(X,(D_Y^\perp I\!I)(V,W)) \\ 
&- \sum_{k=1}^n \hat{g}(I\!I(W,X),I\!I(V,e_k)) \, S(e_k,Y) - \sum_{k=1}^n \hat{g}(I\!I(W,Y),I\!I(V,e_k)) \, S(X,e_k) \\ 
&+ \sum_{p=1}^m \hat{R}(V,X,W,e_p^\perp) \, \hat{S}(e_p^\perp,Y) + \sum_{p=1}^m \hat{R}(V,Y,W,e_p^\perp) \, \hat{S}(X,e_p^\perp) 
\end{align*} 
at each point on $M$. Next, we take the trace over $V,W$ and use the fact that the mean curvature vanishes to eliminate (derivatives of) the trace of $I\!I$. Thus, 
\begin{align*} 
&(\Delta S)(X,Y) \\ 
&= \sum_{l=1}^n (\hat{D}_{e_l,e_l}^2 \hat{S})(X,Y) \\ 
&+ 2 \sum_{l=1}^n (\hat{D}_{e_l} \hat{S})(I\!I(e_l,X),Y) + 2 \sum_{l=1}^n (\hat{D}_{e_l} \hat{S})(X,I\!I(e_l,Y)) \\ 
&+ 2 \sum_{l=1}^n \hat{S}(I\!I(e_l,X),I\!I(e_l,Y)) \\ 
&- \sum_{k,l=1}^n \hat{g}(I\!I(e_l,X),I\!I(e_l,e_k)) \, S(e_k,Y) - \sum_{k,l=1}^n \hat{g}(I\!I(e_l,Y),I\!I(e_l,e_k)) \, S(X,e_k) \\ 
&+ \sum_{p=1}^m \sum_{l=1}^n \hat{R}(e_l,X,e_l,e_p^\perp) \, \hat{S}(e_p^\perp,Y) + \sum_{p=1}^m \sum_{l=1}^n \hat{R}(e_l,Y,e_l,e_p^\perp) \, \hat{S}(X,e_p^\perp) 
\end{align*} 
at each point on $M$. Using the identity \[\sum_{\alpha,\beta=1}^{n+m} \hat{\sigma}^{\alpha\beta} \, \hat{E}_\alpha \otimes \hat{E}_\beta = \sum_{k=1}^n e_k \otimes e_k - \sum_{p=1}^m e_p^\perp \otimes e_p^\perp,\] 
the assertion follows. 
\end{proof}

In the remainder of this section, we assume that $\hat{S}$ is positive definite. In other words, $\hat{S}$ is a Riemannian metric on the ambient manifold $\hat{M}$. 

\begin{proposition}
\label{pde.for.S.2}
Under the hypotheses of Proposition \ref{pde.for.S}, assume that $\hat{S}$ is a positive definite symmetric $(0,2)$-tensor field on $\hat{M}$, and let $S$ denote its restriction to $M$. Suppose that $p_0$ is a point on $M$ and $\{e_1,\hdots,e_n\} \subset T_{p_0} M$ is an orthonormal basis with respect to $g$ that diagonalizes $S$. Then there exists a constant 
$C=C(\|\hat{g}, \hat{g}^{-1}, \hat{S} \|_{C^2(\{p_0\})})$ such that
\[(\Delta S)(e_n,e_n) \geq 2 \sum_{l=1}^n (\hat{R}(e_l,e_n,e_l,e_n) - C S(e_l,e_l)) \, S(e_n,e_n)\] 
at the point $p_0$.
\end{proposition}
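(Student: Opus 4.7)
The plan is to substitute $X=Y=e_n$ into the identity of Proposition \ref{pde.for.S} and estimate each of the seven resulting terms at $p_0$. The central idea is that, since $\hat{S}$ is now a Riemannian metric on $\hat{M}$, we may use it as the reference positive-definite metric to measure all ambient tensor norms. For a tangent vector $V\in T_{p_0}M$ one has $|V|_{\hat{S}}^2=\hat{S}(V,V)=S(V,V)$, so the standard tensor-norm bound $|T(V_1,\dots,V_k)|\le\|T\|_{\hat{S}}\prod_i|V_i|_{\hat{S}}$ converts pointwise bounds on $\hat{D}^j\hat{S}$, $\hat{R}$, and $\hat{g}^{-1}$ (all controlled by $\|\hat{g},\hat{g}^{-1},\hat{S}\|_{C^2(\{p_0\})}$) into bounds by products of $S(V_i,V_i)^{1/2}$. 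Because $\{e_l\}$ diagonalizes $S$, we have $S(e_k,e_n)=\delta_{kn}\,S(e_n,e_n)$, which collapses the tangential curvature sum to the target main term $2\sum_l\hat{R}(e_l,e_n,e_l,e_n)\,S(e_n,e_n)$ and collapses the $\hat{g}$-quadratic line to $-2\sum_l\hat{g}(I\!I(e_l,e_n),I\!I(e_l,e_n))\,S(e_n,e_n)$; the latter is non-negative because $\hat{g}|_{T^\perp M}$ is negative definite and $S(e_n,e_n)>0$, and so may simply be discarded.

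The remaining error terms are handled one by one. The Hessian contribution $\sum_l(\hat{D}^2_{e_l,e_l}\hat{S})(e_n,e_n)$ is bounded below by $-C\sum_l S(e_l,e_l)\,S(e_n,e_n)$ by the tensor-norm estimate. The $\hat{\sigma}^{\alpha\beta}$-contracted curvature term is similarly bounded below by $-C\sum_l S(e_l,e_l)\,S(e_n,e_n)$: since $\hat{\sigma}=\hat{g}^{-1}$ has bounded $\hat{S}$-norm, the full contraction is dominated in absolute value by $\|\hat{g}^{-1}\|_{\hat{S}}\|\hat{R}\|_{\hat{S}}\|\hat{S}\|_{\hat{S}}\,|e_l|_{\hat{S}}^2\,|e_n|_{\hat{S}}^2$.

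The crux is the interplay between the linear-in-$I\!I$ cross term $4\sum_l(\hat{D}_{e_l}\hat{S})(I\!I(e_l,e_n),e_n)$ and the favorable quadratic term $2\sum_l\hat{S}(I\!I(e_l,e_n),I\!I(e_l,e_n))\ge 0$. Cauchy--Schwarz with respect to $\hat{S}$ yields
\[\bigl|(\hat{D}_{e_l}\hat{S})(I\!I(e_l,e_n),e_n)\bigr|\le C\sqrt{S(e_l,e_l)}\,\sqrt{\hat{S}(I\!I(e_l,e_n),I\!I(e_l,e_n))}\,\sqrt{S(e_n,e_n)},\]
and Young's inequality $ab\le\tfrac14 a^2+b^2$ with $a^2=\hat{S}(I\!I(e_l,e_n),I\!I(e_l,e_n))$ upgrades this to
\[\bigl|(\hat{D}_{e_l}\hat{S})(I\!I(e_l,e_n),e_n)\bigr|\le\tfrac14\hat{S}(I\!I(e_l,e_n),I\!I(e_l,e_n))+C^2 S(e_l,e_l)\,S(e_n,e_n).\]
Summing over $l$ and multiplying by $4$, the four cross terms contribute at most $\sum_l\hat{S}(I\!I(e_l,e_n),I\!I(e_l,e_n))+4C^2\sum_l S(e_l,e_l)\,S(e_n,e_n)$, which the factor $2$ in front of $\sum_l\hat{S}(I\!I,I\!I)$ is exactly large enough to dominate, leaving only a remainder of the form $-C\sum_l S(e_l,e_l)\,S(e_n,e_n)$.

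Collecting all the estimates and folding the various constants into a single $C=C(\|\hat{g},\hat{g}^{-1},\hat{S}\|_{C^2(\{p_0\})})$ yields the claimed inequality. The main technical issue is calibrating the Young-inequality split so that the coefficient $2$ multiplying $\hat{S}(I\!I,I\!I)$ is strictly greater than the $4\cdot\tfrac14=1$ produced by the four cross terms; every other step is routine bookkeeping of tensor norms against the Riemannian reference $\hat{S}$, together with the sign observation that $-\hat{g}(I\!I,I\!I)\ge 0$ on the normal bundle.
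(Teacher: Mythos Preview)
Your proposal is correct and follows essentially the same approach as the paper: substitute $X=Y=e_n$ in Proposition~\ref{pde.for.S}, use the diagonalization of $S$ to collapse the tangential sums, discard the nonnegative $-\hat{g}(I\!I,I\!I)\,S(e_n,e_n)$ term, bound the remaining error terms via $\hat{S}$-norms, and absorb the linear-in-$I\!I$ cross term into the quadratic $2\sum_l \hat{S}(I\!I(e_l,e_n),I\!I(e_l,e_n))$ term by Young's inequality. The paper states these steps somewhat more tersely (in particular it leaves the final Young-inequality balancing implicit), but the content is identical.
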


\begin{proof}
Proposition \ref{pde.for.S} implies 
\begin{align*} 
&(\Delta S)(e_n,e_n) \\ 
&= \sum_{l=1}^n (\hat{D}_{e_l,e_l}^2 \hat{S})(e_n,e_n) + 4 \sum_{l=1}^n (\hat{D}_{e_l} \hat{S})(I\!I(e_l,e_n),e_n) \\ 
&+ 2 \sum_{l=1}^n \hat{S}(I\!I(e_l,e_n),I\!I(e_l,e_n)) - 2 \sum_{l=1}^n \hat{g}(I\!I(e_l,e_n),I\!I(e_l,e_n)) \, S(e_n,e_n) \\ 
&- 2 \sum_{\alpha,\beta=1}^{n+m} \sum_{l=1}^n \hat{\sigma}^{\alpha\beta} \, \hat{R}(e_l,e_n,e_l,\hat{E}_\alpha) \, \hat{S}(\hat{E}_\beta,e_n) + 2 \sum_{l=1}^n \hat{R}(e_l,e_n,e_l,e_n) \, S(e_n,e_n) 
\end{align*} 
at the point $p_0$, where $\hat{\sigma}^{\alpha \beta}$ denotes the inverse of the matrix $\hat{g}(\hat{E}_\alpha,\hat{E}_\beta)$. Since the preceding formula does not depend on the choice of the frame $\hat{E}_1,\hdots,\hat{E}_{n+m}$, we may assume that the frame $\hat{E}_1,\hdots,\hat{E}_{n+m}$ is chosen to be orthonormal with respect to the fixed Riemannian metric $\hat{S}$ on ambient space.

Clearly, $I\!I(e_l,e_n)$ is a normal vector for each $l=1,\hdots,n$. Since the metric on the normal space is negative definite, it follows that 
\[\hat{g}(I\!I(e_l,e_n),I\!I(e_l,e_n)) \leq 0\] 
for each $l=1,\hdots,n$. Therefore, 
\[- 2 \sum_{l=1}^n \hat{g}(I\!I(e_l,e_n),I\!I(e_l,e_n)) \, S(e_n,e_n) \geq 0.\] 
We estimate the following multilinear expressions using the fixed Riemannian metric $\hat{S}$ on ambient space (which reduces to $S$ on the tangent space to $M$): 
\[\sum_{l=1}^n (\hat{D}_{e_l,e_l}^2 \hat{S})(e_n,e_n) \geq -C \sum_{l=1}^n S(e_l,e_l) \, S(e_n,e_n),\] 
\[-2 \sum_{\alpha,\beta=1}^{n+m} \sum_{l=1}^n \hat{\sigma}^{\alpha\beta} \, \hat{R}(e_l,e_n,e_l,\hat{E}_\alpha) \, \hat{S}(\hat{E}_\beta,e_n) \geq -C \sum_{l=1}^n S(e_l,e_l) \, S(e_n,e_n),\] 
and 
\begin{align*} 
&4\sum_{l=1}^n (\hat{D}_{e_l} \hat{S})(I\!I(e_l,e_n),e_n) \\ 
&\geq -C \sum_{l=1}^n S(e_l,e_l)^{\frac{1}{2}} \, S(e_n,e_n)^{\frac{1}{2}} \, \hat{S}(I\!I(e_l,e_n),I\!I(e_l,e_n))^{\frac{1}{2}}
\end{align*} 
at the point $p_0$. Putting everything together, we obtain 
\begin{align*} 
(\Delta S)(e_n,e_n) &\geq -C \sum_{l=1}^n S(e_l,e_l) \, S(e_n,e_n) \\ 
&- C \sum_{l=1}^n S(e_l,e_l)^{\frac{1}{2}} \, S(e_n,e_n)^{\frac{1}{2}} \, \hat{S}(I\!I(e_l,e_n),I\!I(e_l,e_n))^{\frac{1}{2}} \\
&+ 2 \sum_{l=1}^n \hat{S}(I\!I(e_l,e_n),I\!I(e_l,e_n)) \\ 
&+ 2 \sum_{l=1}^n \hat{R}(e_l,e_n,e_l,e_n) \, S(e_n,e_n) 
\end{align*} 
at the point $p_0$. The assertion now follows from Young's inequality. 
\end{proof}

In the following, we denote by $\hat{\varphi}$ a nonnegative smooth function on the ambient manifold $\hat{M}$. The function $\hat{\varphi}$ will play the role of a cutoff function. 

\begin{proposition}
\label{maximum.point}
Under the hypotheses of Proposition \ref{pde.for.S}, assume that $\hat{S}$ is a positive definite symmetric $(0,2)$-tensor field on $\hat{M}$, and let $S$ denote its restriction to $M$. Let $\hat{\varphi}$ be a nonnegative smooth function on $\hat{M}$, and let $\varphi$ denote the restriction of $\hat{\varphi}$ to $M$. Suppose that $p_0$ is a point on $M$ with the property that $\varphi(p_0) > 0$ and the largest eigenvalue of $\varphi^{2n-2} S$ with respect to the metric $g$ attains its maximum at the point $p_0$. Suppose that $\{e_1,\hdots,e_n\} \subset T_{p_0} M$ is an orthonormal basis with respect to $g$ that diagonalizes $S$, and $S(e_n,e_n)$ is the largest eigenvalue of $S$ with respect to $g$ at the point $p_0$. Then 
\[\sum_{l=1}^n \hat{R}(e_l,e_n,e_l,e_n) \leq C\varphi^{-2} \, S(e_n,e_n)\] 
at the point {$p_0$. Here}  $C=C(\|\hat{g}, \hat{g}^{-1}, \hat{S}, \hat{\varphi}\|_{C^2(\{p_0\})})$.
\end{proposition}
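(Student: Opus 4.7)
The plan is to prove this by applying the scalar maximum principle to (the logarithm of) $\varphi^{2n-2}S(e_n,e_n)$ at the point $p_0$, and then using Proposition \ref{pde.for.S.2} to express the resulting differential inequality in terms of the curvature of $\hat g$.

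First I would promote the vector $e_n \in T_{p_0}M$ to a smooth local vector field $V$ by parallel transport along radial geodesics emanating from $p_0$. Since parallel transport preserves $g$-inner products along each radial geodesic, $g(V,V)\equiv 1$ on a neighborhood of $p_0$. Moreover, because $V$ is parallel along every radial geodesic, one has $D_X V(p_0)=0$ for every $X\in T_{p_0}M$, and in addition $D_{e_l}D_{e_l}V(p_0)=0$ for each $l$. These properties ensure that, at $p_0$, the function $S(V,V)$ has the same gradient and Laplacian as one obtains by formally acting on the tensor $S$ itself:
\[
D_{e_l}\big(S(V,V)\big)\big|_{p_0} = (D_{e_l}S)(e_n,e_n), \qquad
\Delta\big(S(V,V)\big)\big|_{p_0} = (\Delta S)(e_n,e_n).
\]

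Second, I would introduce the test function $F = \log\!\big(\varphi^{2n-2}\,S(V,V)\big)$. Since $V$ is $g$-unit, $S(V,V)$ is bounded above pointwise by the largest eigenvalue of $S$ with respect to $g$, with equality at $p_0$ by construction. Combined with the hypothesis on $\varphi^{2n-2}S$, this shows that $F$ attains its local maximum at $p_0$. Therefore $\nabla F(p_0)=0$ and $\Delta F(p_0)\le 0$. The first of these yields $(D_{e_l}S)(e_n,e_n) = -(2n-2)\,S(e_n,e_n)\,\varphi^{-1}D_{e_l}\varphi$, and hence
\[
\frac{|\nabla S(V,V)|^2}{S(V,V)^2}\bigg|_{p_0} = (2n-2)^2\,\frac{|\nabla\varphi|^2}{\varphi^2}.
\]
Substituting into $\Delta F(p_0)\le 0$ after expanding $\Delta\log=\Delta/\cdot - |\nabla\cdot|^2/\cdot^2$ on both factors gives the scalar inequality
\[
\frac{(\Delta S)(e_n,e_n)}{S(e_n,e_n)} \;\le\; -(2n-2)\frac{\Delta\varphi}{\varphi} + (2n-2)(2n-1)\frac{|\nabla\varphi|^2}{\varphi^2}.
\]

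Third, I would bound the $\varphi$-terms. Because $M$ has vanishing mean curvature, the identity $\hat D_{e_l}e_l = D_{e_l}e_l + I\!I(e_l,e_l)$ together with $\sum_l I\!I(e_l,e_l)=0$ gives $\Delta_M\varphi = \sum_l (\hat D^2_{e_l,e_l}\hat\varphi)(p_0)$, so $|\Delta_M\varphi|$ and $|\nabla_M\varphi|^2$ are controlled by $\|\hat g,\hat g^{-1},\hat\varphi\|_{C^2(\{p_0\})}$ with no dependence on $I\!I$. Hence the right-hand side above is bounded by $C\varphi^{-2}$ (absorbing the $\varphi^{-1}$ term into $\varphi^{-2}$ using the dependence of $C$ on $\hat\varphi(p_0)$).

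Finally, I would combine this upper bound with the lower bound from Proposition \ref{pde.for.S.2},
\[
\frac{(\Delta S)(e_n,e_n)}{S(e_n,e_n)} \;\ge\; 2\sum_{l=1}^n \hat R(e_l,e_n,e_l,e_n) \;-\; 2C\sum_{l=1}^n S(e_l,e_l),
\]
and use the eigenvalue ordering $S(e_l,e_l)\le S(e_n,e_n)$ to absorb the last sum into $2Cn\,S(e_n,e_n)$. Rearranging yields the asserted bound $\sum_l \hat R(e_l,e_n,e_l,e_n)\le C\varphi^{-2}S(e_n,e_n)$ after absorbing constants. The main technical point to be careful about is ensuring that the scalar Laplacian of $S(V,V)$ agrees with the tensorial $(\Delta S)(e_n,e_n)$ at $p_0$; the radial-parallel extension of $V$ is chosen precisely to kill both $DV(p_0)$ and $\sum_l D_{e_l}D_{e_l}V(p_0)$ so that no extra curvature terms contaminate this identification.
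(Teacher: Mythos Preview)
Your overall strategy---extend $e_n$ by radial parallel transport, apply the maximum principle to $\log(\varphi^{2n-2}S(V,V))$, and invoke Proposition~\ref{pde.for.S.2}---is essentially the paper's argument, and the computations in steps one, two, and four are correct. The logarithmic reformulation and the explicit parallel extension are cosmetic variants of what the paper does directly with the tensor $\varphi^{2n-2}S$.

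There is, however, a genuine gap in your third step. You assert that $|\Delta_M\varphi|$ and $|\nabla_M\varphi|^2$ are controlled by $\|\hat g,\hat g^{-1},\hat\varphi\|_{C^2(\{p_0\})}$ alone. This is false. You have $\Delta_M\varphi=\sum_l(\hat D^2\hat\varphi)(e_l,e_l)$ and $|\nabla_M\varphi|^2=\sum_l(d\hat\varphi(e_l))^2$, where the $e_l$ are $\hat g$-unit spacelike vectors. But the set of $\hat g$-unit spacelike vectors at $p_0$ is \emph{noncompact} (it is a hyperboloid), so a fixed ambient covariant tensor such as $\hat D^2\hat\varphi$ or $d\hat\varphi$ takes unbounded values on it. The pseudo-metric $\hat g$ provides no norm with which to bound these evaluations; this is precisely why the positive-definite reference metric $\hat S$ is introduced.

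The fix is the one the paper uses: since $\hat S(e_l,e_l)=S(e_l,e_l)$, one has
\[
|(\hat D^2\hat\varphi)(e_l,e_l)|\le C\,S(e_l,e_l),\qquad (d\hat\varphi(e_l))^2\le C\,S(e_l,e_l),
\]
with $C$ depending on the $\hat S$-norms of $\hat D^2\hat\varphi$ and $d\hat\varphi$ at $p_0$. This yields the right-hand side bound $C\varphi^{-2}\sum_l S(e_l,e_l)$ rather than $C\varphi^{-2}$. Combining with Proposition~\ref{pde.for.S.2} and then invoking $S(e_l,e_l)\le S(e_n,e_n)$ on \emph{both} contributions gives the claimed inequality directly, without any need to ``absorb constants'' at the end.
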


\begin{proof}
Since the largest eigenvalue of $\varphi^{2n-2} S$ with respect to the metric $g$ attains its maximum at $p_0$, we know that 
\begin{equation} 
\label{first.order.condition}
(D_{e_l} (\varphi^{2n-2} S))(e_n,e_n) = 0 
\end{equation}
for $l=1,\hdots,n$ and 
\begin{equation} 
\label{second.order.condition}
(\Delta (\varphi^{2n-2} S))(e_n,e_n) \leq 0 
\end{equation} 
at the point $p_0$. Combining \eqref{first.order.condition} and \eqref{second.order.condition}, we obtain 
\begin{align} 
\label{main.inequality}
0 
&\geq \varphi^{-2n+2} \, (\Delta (\varphi^{2n-2} S))(e_n,e_n) \notag \\ 
&- (4n-4) \, \varphi^{-2n+1} \sum_{l=1}^n d\varphi(e_l) \, (D_{e_l} (\varphi^{2n-2} S))(e_n,e_n) \notag \\ 
&= (\Delta S)(e_n,e_n) + (2n-2) \, \varphi^{-1} \, (\Delta \varphi) \, S(e_n,e_n) \\ 
&- (2n-2)(2n-1) \, \varphi^{-2} \sum_{l=1}^n (d\varphi(e_l))^2 \, S(e_n,e_n) \notag
\end{align} 
at the point $p_0$. On the other hand, Proposition \ref{pde.for.S.2} gives 
\begin{equation} 
\label{lower.bound.for.Delta.S}
(\Delta S)(e_n,e_n) \geq 2 \sum_{l=1}^n (\hat{R}(e_l,e_n,e_l,e_n)  - C S(e_l,e_l)) \, S(e_n,e_n) 
\end{equation} 
at the point $p_0$. Since the mean curvature of $M$ vanishes, we obtain 
\begin{equation} 
\label{lower.bound.for.Delta.varphi}
\Delta \varphi = \sum_{l=1}^n (\hat{D}^2 \hat{\varphi})(e_l,e_l) \geq -C \sum_{l=1}^n S(e_l,e_l) 
\end{equation} 
Moreover, 
\begin{equation}
\label{upper.bound.for.gradient.varphi} 
\sum_{l=1}^n (d\varphi(e_l))^2 = \sum_{l=1}^n (d\hat{\varphi}(e_l))^2 \leq C \sum_{l=1}^n S(e_l,e_l) 
\end{equation} 
at the point $p_0$. Substituting \eqref{lower.bound.for.Delta.S}, \eqref{lower.bound.for.Delta.varphi}, and \eqref{upper.bound.for.gradient.varphi} into \eqref{main.inequality}, we conclude that 
\[0 \geq 2 \sum_{l=1}^n (\hat{R}(e_l,e_n,e_l,e_n)  - C\varphi^{-2} S(e_l,e_l)) \, S(e_n,e_n)\]
at the point $p_0$. Since $S(e_l,e_l) \leq S(e_n,e_n)$ for $l=1,\hdots,n$, the assertion follows.
\end{proof}

\section{Application to optimal transport}

Let $X$ and $\bar{X}$ be connected orientable manifolds having the same dimension $n$, equipped with nowhere vanishing smooth volume forms $\rho$ and $\bar{\rho}$ satisfying $\int_X \rho=1=\int_{\bar{X}} \bar{\rho}$.  Although we shall often take $\hat{M} = X \times \bar{X}$, it is useful to allow
$\hat{M} \subset X \times \bar{X}$ to be any open domain equipped with
a bounded smooth cost function $c \in C^\infty(\hat{M})$, and extend $c$ to $(X \times \bar{X}) \setminus \hat{M}$ lower semicontinuously (and boundedly). The optimal transportation problem of Kantorovich is to minimize
\begin{equation}
\label{objective}
\int_{X \times \bar{X}} c \, d\gamma
\end{equation}
among joint measures $\gamma \geq 0$ on $X \times \bar{X}$ having $\rho$ and $\bar{\rho}$ as their left and right marginals.
The basic insight gleaned from linear programming duality is the existence of a pair of functions $u \in L^1(X,\rho)$ and $\bar{u} \in L^1(\bar{X},\bar{\rho})$ satisfying
\begin{equation}\label{dual constraint}
u(x) + \bar{u}(\bar{x}) + c(x,\bar{x}) \geq 0 
\quad\mbox{\rm for all}\quad
(x,\bar{x}) \in X \times \bar{X}
\end{equation}
such that $\gamma$ minimizes \eqref{objective} if and only if it vanishes outside the zero set of \eqref{dual constraint}, e.g.~Theorem 5.10 of  \cite{Villani09}.

Monge's version of the same problem is to find a Borel map $F:X \to \bar{X}$ which pulls $\bar{\rho}$ back to $\pm \rho$
(depending on the sign of $\det (-\frac{\partial^2 c}{\partial x^i \partial \bar{x}^j})$),
such that $\gamma=(\text{\rm id} \times F)_\# \rho$ attains the minimum \eqref{objective}.   
We assume that the matrix $\frac{\partial^2 c}{\partial x^i \partial \bar{x}^k} \, dx^i \otimes d\bar{x}^k$ is invertible at each point of $\hat{M}$, which
is hypothesis (A2) of Ma, Trudinger and Wang \cite{MaTrudingerWang05}.

We define a function $\chi: \hat{M} \to (0,\infty)$ so that 
\begin{equation} 
\label{definition.chi}
\chi(x,\bar{x})^n \det \Big ( \frac{\partial^2 c}{\partial x^i \partial \bar{x}^k}(x,\bar{x}) \Big ) \, dx_1 \wedge \hdots dx_n \wedge d\bar{x}_1 \wedge \hdots \wedge d\bar{x}_n = \pm \rho(x) \wedge \bar{\rho}(\bar{x}).
\end{equation} 
Note that $\chi$ is independent of the choice of coordinates. With this understood, 
\begin{equation}\label{KMW metric}
\hat{g} = -\chi \sum_{i,k=1}^n \frac{\partial^2 c}{\partial x^i \partial \bar{x}^k} \, (dx^i \otimes d\bar{x}^k + d\bar{x}^k \otimes dx^i)
\end{equation}
becomes the Kim--McCann--Warren \cite{KimMcCannWarren10}
pseudo-metric on $\hat{M} \subset X \times \bar{X}$. The induced volume form of $\hat{g}$ is given by $\pm \rho \wedge \bar{\rho}$.
In its original (strong) form, the Ma--Trudinger--Wang condition (A3) is equivalent to the condition that 
\begin{equation}\label{MTW}
\hat{R}(\xi \oplus \bar{0},0 \oplus \bar{\xi},\xi \oplus \bar{0},0 \oplus \bar{\xi}) > 0 
\end{equation}
for all points $(x,\bar{x}) \in \hat{M}$ and all nonvanishing tangent vectors $\xi \in T_x X$ and $\bar{\xi} \in T_{\bar{x}} \bar{X}$ satisfying $\hat{g}(\xi \oplus \bar{0},0 \oplus \bar{\xi})=0$. Here, $0 \in T_x X$ and $\bar{0} \in T_{\bar{x}} \bar{X}$ denote zero vectors, see Remark 4.2 of \cite{KimMcCannWarren10}. Note that the condition \eqref{MTW} is not affected by a conformal change of the pseudo-metric $\hat{g}$.

We next define a Riemannian metric $\hat{S}$ on $\hat{M}$. It is convenient to choose the Riemannian metric $\hat{S}$ in a particular way. To explain this, let us fix a Riemannian metric $h$ on $X$ and let $\vol_h$ denote its associated volume form. We define a symmetric $(0,2)$-tensor field $\bar{h}$ on $\hat{M}$ by 
\begin{equation}\label{h metric}
\bar{h} = \chi^2 \sum_{k,l,p,q=1}^n h^{pq} \, \frac{\partial^2 c}{\partial x^p \partial \bar{x}^k} \, \frac{\partial^2 c}{\partial x^q \partial \bar{x}^l} \, d\bar{x}^k \otimes d\bar{x}^l.
\end{equation}
For each point $x \in X$, $\bar{h}$ defines a Riemannian metric on $\bar{X}$. We denote its volume form by $\vol_{\bar{h}}$. In view of \eqref{definition.chi}, the volume forms of $h$ and $\bar{h}$ are related by $\vol_h \wedge \vol_{\bar{h}} = \pm \rho \wedge \bar{\rho}$. With this understood, we define a Riemannian metric $\hat{S}$ on $\hat{M}$ by 
\begin{equation}\label{S metric}
\hat{S} = h + \bar{h}.
\end{equation}
Note that $\hat{S}$ is not a product metric, as $\bar{h}$ depends on $x \in X$. The induced volume form of $\hat{S}$, like $\hat{g}$, is given by $\pm \rho \wedge \bar{\rho}$. 

Suppose that a diffeomorphism $F: X \to \bar{X}$ solves Monge's transport problem, so that the minimizer $\gamma$ of \eqref{objective}
vanishes outside $\Graph(M) := \{(x,F(x)): x \in X\}$. 
Denote by $g$ and $S$ the restrictions of $\hat{g}$ and $\hat{S}$ to $M := \hat{M} \cap \Graph(F)$. 
Kim, McCann and Warren \cite{KimMcCannWarren10} show that $M$ is locally volume maximizing with respect to the metric $\hat{g}$,
hence has zero mean curvature with respect to $\hat{g}$.

\begin{theorem}[Apriori local estimates for optimal diffeomorphisms]
\label{T:main}
Let $\hat{M} \subset X \times \bar{X}$ be an open domain. Suppose that $0 \leq \hat{\varphi} \in C^\infty(\hat{M})$ has compact support $\spt \hat{\varphi} \subset \hat{M}$. Assume the Ma--Trudinger--Wang conditions {\rm (A2)--(A3)} hold on $\hat{M}$. Let $\kappa$ be a positive constant with the property that 
\begin{equation}\label{MTW.uniform}
\hat{R}(\xi \oplus \bar{0}, 0 \oplus \bar{\xi}, \xi \oplus \bar{0}, 0 \oplus \bar{\xi}) \geq \kappa \, h(\xi,\xi) \, \bar{h}(\bar{\xi},\bar{\xi})
\end{equation}
for all points $(x,\bar{x}) \in \spt \hat{\varphi}$ and all tangent vectors $\xi \in T_x X$ and $\bar{\xi} \in T_{\bar{x}} \bar{X}$ satisfying $\hat{g}(\xi \oplus \bar{0},0 \oplus \bar{\xi})=0$. Let $F:X \to \bar{X}$ be a diffeomorphism that minimizes Monge's cost $c$ between $\rho$ and $\bar{\rho}$. Then there is a constant $C$, depending on $\|\hat{g}, \hat{g}^{-1}, \hat{S}, \hat{\varphi} \|_{C^2\left(\spt \hat{\varphi} \right)}$ and $\| \log (\frac{\rho}{\vol_h}) \|_{C^0(\spt \hat{\varphi})}$, such that 
\[\kappa^{n-1} \varphi^{2n-2} S \leq C g\] 
on $M$. Here, $\varphi$ denotes the restriction of $\hat{\varphi}$ to $M=\hat{M} \cap \Graph(F)$, and $g$ and $S$ denote the restrictions of $\hat{g}$ and $\hat{S}$ from \eqref{KMW metric} and \eqref{S metric} to $M$. 
\end{theorem}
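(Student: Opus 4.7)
The plan is to apply Proposition~\ref{maximum.point} to the graph submanifold $M$ at the point where the weighted largest eigenvalue of $S$ attains its maximum, and then bound the resulting curvature sum from below using the Ma--Trudinger--Wang condition \eqref{MTW.uniform}.

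Since $\hat\varphi$ has compact support in $\hat M$, the function on $M$ assigning to each point the largest eigenvalue of $\varphi^{2n-2} S$ with respect to $g$ is nonnegative, continuous, and compactly supported, hence attains its maximum at some $p_0 \in M$. If this maximum is zero, the estimate holds trivially, so assume $\varphi(p_0) > 0$. Because Kim, McCann, and Warren showed that $M$ is locally volume maximizing with respect to $\hat g$ and therefore has zero mean curvature, the hypotheses of Proposition~\ref{maximum.point} are satisfied. Choosing a $g$-orthonormal basis $\{e_1,\ldots,e_n\}$ of $T_{p_0} M$ diagonalizing $S$ with $S(e_n,e_n)$ the largest eigenvalue, the proposition gives
\[ \sum_{l=1}^n \hat R(e_l,e_n,e_l,e_n) \leq C\,\varphi(p_0)^{-2}\, S(e_n,e_n). \]

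The remaining task is to bound the left-hand side from below via \eqref{MTW.uniform}. At $p_0 = (x,F(x))$, the graph is transverse both to the horizontal subspace $H := T_x X \oplus 0$ and to the vertical subspace $V := 0 \oplus T_{\bar x}\bar X$, so each $e_l$ admits a unique decomposition $e_l = (\xi_l \oplus 0) + (0 \oplus \bar\eta_l)$ with $\xi_l \in T_x X$ and $\bar\eta_l \in T_{\bar x}\bar X$. The distributions $H$ and $V$ are integrable, $\hat g$-null, and (by direct computation of the Christoffel symbols of \eqref{KMW metric}) totally geodesic, which drastically restricts the independent components of $\hat R$: up to the usual Riemann symmetries and the first Bianchi identity, only the Ma--Trudinger--Wang components $\hat R(H,V,H,V)$ survive. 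Expanding $\hat R(e_l,e_n,e_l,e_n)$ multilinearly in the horizontal/vertical decomposition and discarding the terms forced to vanish should leave only combinations of such MTW components, to which \eqref{MTW.uniform} applies and produces a lower bound of the schematic form
\[ \hat R(e_l,e_n,e_l,e_n) \geq \kappa\, h(\xi_l,\xi_l)\, \bar h(\bar\eta_n,\bar\eta_n) + (\text{nonnegative companion terms}) \]
at $p_0$. The orthogonality condition $\hat g(\xi \oplus 0, 0 \oplus \bar\xi)=0$ required by \eqref{MTW.uniform} can be arranged by rotating within the $S$-eigenspaces, or accommodated by exploiting the antisymmetry of the Riemann tensor to cancel non-orthogonal cross-pairings.

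Summing this lower bound over $l$ and combining with the upper bound from Proposition~\ref{maximum.point}, I would then use $S(e_l,e_l) = h(\xi_l,\xi_l) + \bar h(\bar\eta_l,\bar\eta_l)$ from \eqref{S metric}, together with the fact that $\hat g$ and $\hat S$ both induce $\pm\rho\wedge\bar\rho$ as their volume form on $\hat M$ (which constrains $\prod_l h(\xi_l,\xi_l)\cdot\prod_l \bar h(\bar\eta_l,\bar\eta_l)$), to recast the inequality in terms of the eigenvalues $S(e_l,e_l)$. An arithmetic--geometric mean or Newton--Maclaurin-type inequality should then extract the factor $\kappa^{n-1}$ and yield $\kappa^{n-1}\varphi(p_0)^{2n-2}S(e_n,e_n) \leq C$. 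Since $p_0$ was chosen as the global maximum of $\varphi^{2n-2}S$, this extends to $\kappa^{n-1}\varphi^{2n-2}S \leq Cg$ on all of $M$. The hard part will be the algebraic content of the previous paragraph: rigorously identifying which components of $\hat R$ vanish under the split Lagrangian structure, tracking the orthogonality requirement in \eqref{MTW.uniform}, and extracting the precise $\kappa^{n-1}$ power from an eigenvalue inequality without spoiling constants hidden in the lower-order terms coming from Proposition~\ref{pde.for.S.2}.
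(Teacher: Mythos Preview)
Your overall strategy matches the paper's: apply Proposition~\ref{maximum.point} at the maximum point, split each $e_l$ into horizontal and vertical parts, expand the curvature multilinearly, invoke~\eqref{MTW.uniform} on the good pieces, and close with an AM--GM argument tied to a determinant constraint. But two of your intermediate claims are not correct as stated, and fixing them requires the concrete eigenvalue parametrization the paper introduces.

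First, it is \emph{not} true that ``only the Ma--Trudinger--Wang components $\hat R(H,V,H,V)$ survive.'' The special structure of $\hat R$ (Lemma~4.1 of \cite{KimMcCann10}, Remark~4.2 of \cite{KimMcCannWarren10}) guarantees that the purely horizontal and purely vertical components $\hat R(H,H,H,H)$ and $\hat R(V,V,V,V)$ vanish, but mixed terms such as $\hat R(\xi_i\oplus\bar 0,\xi_n\oplus\bar 0,0\oplus\bar\xi_i,\xi_n\oplus\bar 0)$ or $\hat R(\xi_i\oplus\bar 0,\xi_n\oplus\bar 0,0\oplus\bar\xi_i,0\oplus\bar\xi_n)$ need not vanish for the conformally weighted metric~\eqref{KMW metric}. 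In the paper these cross terms are bounded by constants (depending on $\|\hat g\|_{C^2}$) times products of the $\lambda_i$, and then absorbed into the leading $\kappa(\lambda_n^2+\lambda_i^2)$ term; they are not ``nonnegative companion terms.'' Your schematic lower bound is therefore too optimistic.

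Second, the orthogonality hypothesis in~\eqref{MTW.uniform} cannot be arranged after the fact by ``rotating within $S$-eigenspaces'' or by antisymmetry. What the paper does instead is introduce the symmetric positive matrix $A=\chi B$ (with $B$ from~\eqref{SOC}--\eqref{symmetry}) and diagonalize $A$ with respect to $h$, obtaining eigenvalues $\lambda_1,\ldots,\lambda_n$. This choice simultaneously (i) makes $\hat g(\xi_i\oplus\bar 0,0\oplus\bar\xi_j)=\delta_{ij}$, so the orthogonality needed for~\eqref{MTW.uniform} holds automatically when $i\neq n$; (ii) identifies the $S$-eigenvalues as $\mu_i=\tfrac12(\lambda_i+\lambda_i^{-1})$; and (iii) gives the determinant constraint $\prod_i\lambda_i=(\rho/\mathrm{vol}_h)^2$ from~\eqref{determinant.of.A}. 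With this parametrization the cross terms are bounded by $C(\lambda_i+\lambda_i\lambda_n^2+\lambda_n+\lambda_i^2\lambda_n+\lambda_i\lambda_n)$, the AM--GM step yields $\sum_{i<n}\lambda_i^{-1}\lambda_n\geq C^{-1}\lambda_n^{n/(n-1)}$, and the $\kappa^{n-1}$ power drops out cleanly. Without the $\lambda_i$ you have neither the orthogonality nor the algebra needed to turn the curvature lower bound into $\kappa\,\mu_n^{n/(n-1)}/C - C\mu_n$.
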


\begin{proof}
We consider a point $p_0 \in M$ where the largest eigenvalue of $\varphi^{2n-2} S$ with respect to the metric $g$ attains its maximum. Such a point exists by the smoothness of the objects in question and the fact that $\hat{\varphi}$ has compact support. If $\varphi(p_0) = 0$, the assertion is trivial. Hence, we may assume that $\varphi(p_0) > 0$. Let us write $p_0 = (x_0,F(x_0))$ for some point $x_0 \in X$. Let $(x^1,\hdots,x^n)$ be a local coordinate system on $X$ such that $h_{ij} = \delta_{ij}$ at $x_0$ and let 
$(\bar{x}^1,\hdots,\bar{x}^n)$ denote a local coordinate system on $\bar{X}$ such that $\bar{h}_{kl} = \delta_{kl}$ at $(x_0,F(x_0))$.

Recall there exist $u \in L^1(X,\rho)$ and $\bar{u} \in L^1(\bar{X},\bar{\rho})$ satisfying the inequality \eqref{dual constraint} with equality on $M$:
\begin{equation}
\label{dual constraint=}
u(x) + \bar{u}(F(x)) + c(x,F(x)) =0.
\end{equation}
From smoothness of $c$ at $(x,F(x))$ it follows that $u$ is continuous and semiconvex in a neighbourhood of $x$, as in \cite{McCannGuillen13}.
The first and second-order conditions for \eqref{dual constraint},\eqref{dual constraint=} assert
\begin{equation}
\label{FOC}
\frac{\partial u}{\partial x^i} + \frac{\partial c}{\partial x^i}\Big|_{\bar{x} =F(x)}=0 \qquad i=1,\ldots,n,
\end{equation}
and symmetric nonnegative definiteness of $(B_{ij})_{1\leq i,j \leq n}$ given by
\begin{equation}
\label{SOC} 
B_{ij} := \frac{\partial^2 u}{\partial x^i \partial x^j} + \frac{\partial^2 c}{\partial x^i \partial x^j} \Big |_{\bar{x} = F(x)}. 
\end{equation} 
A priori these conditions hold almost everywhere, but smoothness of $u$ then follows from the hypothesized smoothness of $F$ and \eqref{FOC},
whence both conditions actually hold everywhere. Differentiating \eqref{FOC} yields
\begin{equation}\label{symmetry}
B_{ij} = -\sum_{k=1}^n \frac{\partial^2 c}{\partial x^i \partial \bar{x}^k} \, \frac{\partial F^k}{\partial x^j}.
\end{equation}
Since $F^\# \bar{\rho} = \pm \rho$ by hypothesis, we find
\[\det(B) \, dx^1 \wedge \cdots \wedge dx^n \wedge \bar{\rho} \Big|_{\bar{x} = F(x)} = \pm \det \Big ( \frac{\partial^2 c}{\partial x^i \partial \bar{x}^k} \Big ) \Big|_{\bar{x} = F(x)} \, \rho \wedge d\bar{x}^1 \wedge \cdots \wedge d\bar{x}^n\] 
As (A2) ensures the last term is non-vanishing on $\hat{M}$, nonnegative definiteness of the symmetric matrix $B$ improves to positive definiteness on $M$. 

In the coordinates we have chosen, $\vol_h = \pm dx^1 \wedge \cdots \wedge dx^n$ and $\vol_{\bar{h}} = \pm d\bar{x}^1 \wedge \cdots d\bar{x}^n$ at the point $(x_0,F(x_0))$. Therefore, 
\[\det(B) \, \frac{\bar{\rho}}{\vol_{\bar{h}}} = \pm \det \Big ( \frac{\partial^2 c}{\partial x^i \partial \bar{x}^k} \Big ) \, \, \frac{\rho}{\vol_h}\] 
at the point $(x_0,F(x_0))$. Moreover, \eqref{definition.chi} implies $\chi^n \det (\frac{\partial^2 c}{\partial x^i \partial \bar{x}^k}) = \pm \frac{\rho}{\vol_h} \, \frac{\bar{\rho}}{\vol_{\bar{h}}}$ at the point $(x_0,F(x_0))$. This gives 
\begin{equation}
  \label{determinant.of.B}
  \chi^n \det(B) = \Big ( \frac{\rho}{\vol_h} \Big )^2
\end{equation}
at the point $x_0$. Hence, if we put $A := \chi B$, then 
\begin{equation}
\label{determinant.of.A}
\det(A) = \Big ( \frac{\rho}{\vol_h} \Big )^2
\end{equation} 
at the point $x_0$. In the next step, we relate the eigenvalues of $A$ with respect to $h$ to the eigenvalues of $S$ with respect to $g$. By a suitable choice of the coordinates $(x^1,\ldots,x^n)$, we can arrange that $A_{ij} = \lambda_i \, \delta_{ij}$ as well as $h_{ij}=\delta_{ij}$ at $x_0$. We define 
\begin{equation}\label{unit.xi}
\xi_i = \frac{\partial}{\partial x_i} \in T_{x_0} X, \qquad \bar{\xi}_i = \lambda_i^{-1} \sum_{k=1}^n \frac{\partial F^k}{\partial x^i} \, \frac{\partial}{\partial \bar{x}^k} \in T_{F(x_0)} \bar{X}
\end{equation}
for $i=1,\hdots,n$. Then
\begin{equation} 
\hat{g}(\xi_i \oplus \bar{0},0 \oplus \bar{\xi}_j) = -\lambda_j^{-1} \, \chi \sum_{k=1}^n \frac{\partial^2 c}{\partial x^i \partial \bar{x}^k} \, \frac{\partial F^k}{\partial x^j} = \lambda_j^{-1} \, A_{ij} = \delta_{ij} 
\end{equation} 
for $i,j=1,\hdots,n$. Moreover, $h(\xi_i,\xi_j) = \delta_{ij}$ and 
\begin{align}
\label{unit.bar.xi}
\bar{h}(\bar{\xi}_i,\bar{\xi}_j) 
&= \lambda_i^{-1} \, \lambda_j^{-1} \, \chi^2 \sum_{k,l,p,q=1}^n h^{pq} \, \frac{\partial^2 c}{\partial x^p \partial \bar{x}^k} \, \frac{\partial^2 c}{\partial x^q \partial \bar{x}^l} \, \frac{\partial F^k}{\partial x^i} \, \frac{\partial F^l}{\partial x^j} \notag \\ 
&= \lambda_i^{-1} \, \lambda_j^{-1} \sum_{p,q=1}^n h^{pq} \, A_{pi} \, A_{qj} \\ 
&= \delta_{ij} \notag
\end{align}
for $i,j=1,\hdots,n$. We next define 
\[e_i = (2\lambda_i)^{-\frac{1}{2}} \, (\xi_i \oplus \lambda_i \, \bar{\xi}_i) \in T_{(x_0,F(x_0))} M\] 
and 
\[\mu_i = \frac{1}{2} (\lambda_i+\lambda_i^{-1})\] 
for $i=1,\hdots,n$. Then 
\[\hat{g}(e_i,e_j) = (2\lambda_i)^{-\frac{1}{2}} \, (2\lambda_j)^{-\frac{1}{2}} \, (\lambda_j \, \hat{g}(\xi_i \oplus \bar{0},0 \oplus \bar{\xi}_j) + \lambda_i \, \hat{g}(0 \oplus \bar{\xi}_i,\xi_j \oplus \bar{0})) = \delta_{ij}\] 
and 
\[S(e_i,e_j) = (2\lambda_i)^{-\frac{1}{2}} \, (2\lambda_j)^{-\frac{1}{2}} \, (h(\xi_i,\xi_j) + \lambda_i \lambda_j \, \bar{h}(\bar{\xi}_i,\bar{\xi}_j)) = \mu_i \, \delta_{ij}\] 
for $i,j=1,\hdots,n$. Thus, $\{e_1,\hdots,e_n\} \subset T_{(x_0,F(x_0))} M$ is an orthonormal basis with respect to $g$ which diagonalizes $S$. Moreover, the eigenvalues of $S$ with respect to $g$ at the point $p_0$ are given by $\mu_1,\hdots,\mu_n$. Without loss of generality, we may assume that $\mu_i \leq \mu_n$ for $i=1,\hdots,n-1$, so that $\mu_n$ is the largest eigenvalue of $S$ with respect to $g$ at the point $p_0$.

Note that $\hat{g}(\xi_i \oplus \bar{0},0 \oplus \bar{\xi}_n) = \hat{g}(\xi_n \oplus \bar{0},0 \oplus \bar{\xi}_i) = 0$ for $i=1,\hdots,n-1$. Moreover, it follows from \eqref{unit.xi} and \eqref{unit.bar.xi} that $h(\xi_i,\xi_i) = h(\xi_n,\xi_n) = \bar{h}(\bar{\xi}_i,\bar{\xi}_i) = \bar{h}(\bar{\xi}_n,\bar{\xi}_n) = 1$ for $i=1,\hdots,n-1$. Therefore, the 
uniform Ma--Trudinger--Wang condition \eqref{MTW.uniform} yields 
\[\hat{R}(\xi_i \oplus \bar{0},0 \oplus \bar{\xi}_n,\xi_i \oplus \bar{0},0 \oplus \bar{\xi}_n) \geq \kappa \, h(\xi_i,\xi_i) \, \bar{h}(\bar{\xi}_n,\bar{\xi}_n) = \kappa\] 
and 
\[\hat{R}(\xi_n \oplus \bar{0},0 \oplus \bar{\xi}_i,\xi_n \oplus \bar{0},0 \oplus \bar{\xi}_i) \geq \kappa \, h(\xi_n,\xi_n) \, \bar{h}(\bar{\xi}_i,\bar{\xi}_i) = \kappa\] 
for $i=1,\hdots,n-1$. Using the special structure of the curvature tensor $\hat{R}$ described in Remark 4.2 of \cite{KimMcCannWarren10} and Lemma 4.1 of \cite{KimMcCann10}, we obtain 
\[\hat{R}(\xi_i \oplus \bar{0},\xi_n \oplus \bar{0},\xi_i \oplus \bar{0},\xi_n \oplus \bar{0}) = 0\] 
and 
\[\hat{R}(0 \oplus \bar{\xi}_i,0 \oplus \bar{\xi}_n,0 \oplus \bar{\xi}_i,0 \oplus \bar{\xi}_n) = 0\] 
for $i=1,\hdots,n-1$. Using the identity 
\begin{align*} 
&\hat{R}(\xi_i \oplus \lambda_i \, \bar{\xi}_i,\xi_n \oplus \lambda_n \, \bar{\xi}_n,\xi_i \oplus \lambda_i \, \bar{\xi}_i,\xi_n \oplus \lambda_n \, \bar{\xi}_n) \\ 
&= \hat{R}(\xi_i \oplus \bar{0},\xi_n \oplus \bar{0},\xi_i \oplus \bar{0},\xi_n \oplus \bar{0}) + \lambda_i^2 \lambda_n^2 \, \hat{R}(0 \oplus \bar{\xi}_i,0 \oplus \bar{\xi}_n,0 \oplus \bar{\xi}_i,0 \oplus \bar{\xi}_n) \\ 
&+ \lambda_n^2 \, \hat{R}(\xi_i \oplus \bar{0},0 \oplus \bar{\xi}_n,\xi_i \oplus \bar{0},0 \oplus \bar{\xi}_n) + \lambda_i^2 \, \hat{R}(0 \oplus \bar{\xi}_i,\xi_n \oplus \bar{0},0 \oplus \bar{\xi}_i,\xi_n \oplus \bar{0}) \\ 
&+ 2\lambda_i \, \hat{R}(\xi_i \oplus \bar{0},\xi_n \oplus \bar{0},0 \oplus \bar{\xi}_i,\xi_n \oplus \bar{0}) + 2\lambda_i\lambda_n^2 \, \hat{R}(\xi_i \oplus \bar{0},0 \oplus \bar{\xi}_n,0 \oplus \bar{\xi}_i,0 \oplus \bar{\xi}_n) \\ 
&+ 2\lambda_n \, \hat{R}(\xi_i \oplus \bar{0},\xi_n \oplus \bar{0},\xi_i \oplus \bar{0},0 \oplus \bar{\xi}_n) + 2\lambda_i^2\lambda_n \, \hat{R}(0 \oplus \bar{\xi}_i,\xi_n \oplus \bar{0},0 \oplus \bar{\xi}_i,0 \oplus \bar{\xi}_n) \\ 
&+ 2\lambda_i\lambda_n \, [\hat{R}(\xi_i \oplus \bar{0},\xi_n \oplus \bar{0},0 \oplus \bar{\xi}_i,0 \oplus \bar{\xi}_n) + \hat{R}(\xi_i \oplus \bar{0},0 \oplus \bar{\xi}_n,0 \oplus \bar{\xi}_i,\xi_n \oplus \bar{0})], 
\end{align*} 
we obtain 
\begin{align*} 
&\hat{R}(\xi_i \oplus \lambda_i \, \bar{\xi}_i,\xi_n \oplus \lambda_n \, \bar{\xi}_n,\xi_i \oplus \lambda_i \, \bar{\xi}_i,\xi_n \oplus \lambda_n \, \bar{\xi}_n) \\ 
&\geq \kappa \, (\lambda_n^2+\lambda_i^2) - C \, (\lambda_i + \lambda_i \lambda_n^2) - C \, (\lambda_n + \lambda_i^2 \lambda_n) - C\lambda_i \lambda_n 
\end{align*} 
for $i=1,\hdots,n-1$. This implies 
\[\hat{R}(e_i,e_n,e_i,e_n) \geq \frac{\kappa}{4} \, (\lambda_i^{-1} \lambda_n + \lambda_i \lambda_n^{-1}) - C \, (\lambda_n+\lambda_n^{-1}) - C \, (\lambda_i+\lambda_i^{-1}) - C \] 
for $i=1,\hdots,n-1$. In the next step, we sum over $i=1,\hdots,n-1$. Since $1 \leq \mu_i \leq \mu_n$ for $i=1,\hdots,n-1$, it follows that 
\begin{equation} 
\label{consequence.of.mtw.condition}
\sum_{i=1}^{n-1} \hat{R}(e_i,e_n,e_i,e_n) \geq \frac{\kappa}{4} \sum_{i=1}^{n-1} (\lambda_i^{-1} \lambda_n + \lambda_i \lambda_n^{-1}) - C\mu_n. 
\end{equation}
Using \eqref{determinant.of.A}, we obtain $\prod_{i=1}^n \lambda_i = \det A = \big ( \frac{\rho}{\vol_h} \big )^2$ at $(x_0,F(x_0))$. Using the arithmetic-geometric mean inequality, we obtain 
\begin{equation} 
\label{sum.lambda}
\sum_{i=1}^{n-1} \lambda_i^{-1} \geq (n-1) \, \bigg ( \prod_{i=1}^{n-1} \lambda_i^{-1} \bigg )^{\frac{1}{n-1}} \geq \frac{1}{C} \, \lambda_n^{\frac{1}{n-1}} 
\end{equation} 
and 
\begin{equation}
\label{sum.lambda.inverse} 
\sum_{i=1}^{n-1} \lambda_i \geq (n-1) \, \bigg ( \prod_{i=1}^{n-1} \lambda_i \bigg )^{\frac{1}{n-1}} \geq \frac{1}{C} \, \lambda_n^{-\frac{1}{n-1}}. 
\end{equation} 
Substituting \eqref{sum.lambda} and \eqref{sum.lambda.inverse} into \eqref{consequence.of.mtw.condition} gives 
\begin{equation} 
\label{key.estimate.1}
\sum_{i=1}^{n-1} \hat{R}(e_i,e_n,e_i,e_n) \geq \frac{\kappa}{C} \, \mu_n^{\frac{n}{n-1}} - C\mu_n. 
\end{equation}
On the other hand, Proposition \ref{maximum.point} implies that 
\begin{equation} 
\label{maximum.principle.calculation}
\sum_{i=1}^{n-1} \hat{R}(e_i,e_n,e_i,e_n) \leq C \varphi^{-2} \mu_n 
\end{equation} 
at the point $p_0$. Combining \eqref{key.estimate.1} and \eqref{maximum.principle.calculation}, we conclude that $\kappa \mu_n^{\frac{n}{n-1}} \leq C \varphi^{-2} \mu_n + C\mu_n$ at the point $p_0$. Since $\varphi \leq C$, it follows that $\kappa \varphi^2 \mu_n^{\frac{1}{n-1}} \leq C$ at the point $p_0$. This shows that the largest eigenvalue of $\kappa^{n-1} \varphi^{2n-2} S$ is uniformly bounded from above at the point $p_0$, and hence on all of $\spt \hat{\varphi}$.
\end{proof}

\begin{remark}[Quantitatively spacelike, slope, recovery of $C^1$ bounds] \label{rem:quant}
We may view the apriori lower bound $\kappa^{n-1} \varphi^{2n-2} S \leq Cg$ given by Theorem~\ref{T:main} as quantifying the spacelikeness of $\Graph(F)$. 
Since $\hat{g}$ vanishes on the horizontal fibres $X \times \{\bar{x}\}$ and vertical fibres $\{x\} \times \bar{X}$,  a fortiori this quantifies the transversality with which $F$
intersects these fibres, thus translating to a $C^1$ bound for $F^{-1}$ and for $F$.  More explicitly, the estimate in Theorem \ref{T:main} 
is equivalent to the inequality 
\begin{equation} 
\label{eigenvalue.bound} 
\lambda_i+\lambda_i^{-1} \leq C \kappa^{1-n} \varphi^{2-2n}
\end{equation}
for $i=1,\hdots,n$, where $\lambda_1,\hdots,\lambda_n$ denote the eigenvalues of $A = \chi B$ with respect to the metric $h$. The inequality \eqref{eigenvalue.bound} implies two separate bounds for $\lambda_i$ and for $\lambda_i^{-1}$. The bound for $\lambda_i$ quantifies the transversality with which the graph of $F$ intersects the vertical fibers $\{x\} \times \bar{X}$; it corresponds to an interior $C^1$-estimate for $F$. The bound for $\lambda_i^{-1}$ quantifies the transversality with which the graph of $F$ intersects the horizontal fibers $X \times \{\bar{x}\}$; it corresponds to an interior $C^1$-estimate for $F^{-1}$. In the setting of Ma--Trudinger--Wang, we may take $X,\bar{X}$ as open subsets of $\mathbf{R}^n$ and $h$ as the usual Euclidean metric. The definition \eqref{SOC} of $B$ then implies the Ma--Trudinger--Wang estimates as they appear in \cite{MaTrudingerWang05}.
\end{remark}

\begin{remark}[Optimal maps and dual potentials]\label{R:dual potentials}
The potentials $u \in L^1(X,\rho)$ and $\bar{u} \in L^1(\bar{X},\bar{\rho})$ from \eqref{dual constraint} which certify optimality of $\gamma$ in \eqref{objective} can be interpreted as Lagrange
multipliers for the marginal constraints $\rho$ and $\bar{\rho}$.  When $\hat{M} = X \times \bar{X}$, then
$u$ is well-known to become continuous and semiconvex if the second $x$ derivatives of $c$ are bounded locally on $X$ uniformly with respect to $\bar{x} \in \bar{X}$.
Similarly, $\bar{u}$ is semiconvex if the corresponding condition holds under the interchange $x \leftrightarrow \bar{x}$, e.g.~\cite{McCannGuillen13}.
 A sufficient condition for existence of a Borel solution $F:X \to \bar{X}$ to Monge's problem is then that
$x \in X \mapsto c(x,\bar{x}) - c(x,\bar{x}')$ be free from critical points for all $\bar{x}' \ne \bar{x} \in \bar{X}$.
Under hypothesis {\rm (A1)} of \cite{MaTrudingerWang05}, which asserts that both $c$ and $\tilde c(\bar{x}, x) =c(x,\bar{x})$
satisfy this restriction,  the solution $F$ is injective outside a $\rho$-negligible set.  If, in addition, $F$ happens to be continuously differentiable,
then it follows from non-degeneracy {\rm (A2)} that $F$ is a diffeomorphism, arguing as in the {second} paragraph of the proof of Theorem \ref{T:main}. Higher regularity can be bootstrapped by applying Evans \cite{Evans82} or Krylov \cite{Krylov82} and then standard elliptic theory~\cite{GilbargTrudinger98}.
\end{remark}

\begin{remark}[Regularity of weak solutions]
\label{R:regularity}
Diffeomorphisms $F:X \to \bar{X}$ solving Monge's problem may or may not exist \cite{Caffarelli92,Loeper09},
depending on the choice of volume forms $\rho,\bar{\rho}$ and the cost $c$.
Under an additional hypothesis
-- equivalent to 
geodesic convexity of the vertical fibers $\{x\} \times \bar{X}$ with respect to the Kim-McCann metric \cite{KimMcCann10}, or equivalently the Kim-McCann-Warren metric 
$\hat{g}$ -- Ma, Trudinger and Wang construct a diffeomorphism by starting from a $c$-convex potential $u$ which solves the dual problem and
showing it uniquely solves the associated Monge--Amp\`ere type equation in the sense of Alexandrov.  To establish interior regularity of $u$, Ma, Trudinger, and Wang use the continuity method to construct a smooth solution of the Monge--Amp\`ere type equation which is defined on a small ball and agrees with $u$ on the boundary of the ball.
This involves deforming the boundary data given by $u$ to a new set of boundary data which is smooth and $c$-convex; 
this argument was extended to generated Jacobian equations in \cite{Trudinger14}.  Independently of the regularization parameter,
the $C^2$-estimate ensures the solution of the Monge--Amp\`ere type equation is uniformly elliptic, hence $c$-convex  \cite{TrudingerWang09c,KimMcCann10},  after which higher regularity follows from
Evans--Krylov and standard elliptic theory as in Remark~\ref{R:dual potentials}.
\end{remark}

\begin{remark}[Boundary behaviour, global regularity, compact manifolds] 
Theorem \ref{T:main} only addresses interior regularity. Studying the boundary regularity requires additional techniques, as in e.g.~\cite{Caffarelli92b,Caffarelli96b,Urbas97,TrudingerWang09b,ChenLiuWang21}. 

If $X$ and $\bar{X}$ are compact manifolds without boundary, taking $\hat{M} = X \times \bar{X}$ is inconsistent with global non-degeneracy {\rm (A2)} of $\hat{g}$. 
However, in some situations it is possible to identify an open subset $\hat{M} \subset X \times \bar{X}$, where the conditions {\rm (A2)--(A3)} are satisfied. If we know that $\Graph(F)$ is contained in such a subset $\hat{M}$, then Theorem \ref{T:main} yields a global estimate, extending \cite{Warren-unpublished} to dimensions $n>2$.

As an example, suppose that we take $X$ and $\bar{X}$ to be the round sphere ${\bS}^n$ and define the cost function $c$ by $c(x,\bar{x})=d^2(x,\bar{x})/2$. In this case, we may take $\hat{M}$ to be the set where $c$ is smooth (i.e. the complement of the cut locus). If the densities $\rho$ and $\bar{\rho}$ are bounded from above and below, it is known~\cite{DelanoeLoeper06} that $\Graph(F)$ is disjoint from the cut locus. Regularity then follows \cite{Loeper11}. This line of reasoning can be generalized to perturbations \cite{DelanoeGe10}, submersions \cite{KimMcCann12} or products \cite{FigalliKimMcCannJEMS13} of round spheres.
\end{remark}

\bibliographystyle{plain}

\end{document}